\newtheorem{thm}{Theorem}
\newtheorem{corr}[thm]{Corollary}
\newtheorem{lem}[thm]{Lemma}
\newtheorem{prop}[thm]{Proposition}
\theoremstyle{definition}
\newtheorem*{ack}{Acknowledgment}
\newtheorem{rem}[thm]{Remark}
\def\R{\mathbb R}
\def\H{\mathbb H}
\def\SS{\mathbb S}
\def\pt{\partial}
\begin{document}
\title[A gap theorem in hyperbolic space and hemisphere]{A gap theorem for free boundary minimal surfaces in geodesic balls of hyperbolic space and hemisphere}
\author{Haizhong Li}
\address{Department of mathematical sciences, Tsinghua University, 100084, Beijing, P. R. China}
\email{\href{mailto:hli@math.tsinghua.edu.cn}{hli@math.tsinghua.edu.cn}}
\author{Changwei Xiong}
\address{Mathematical Sciences Institute, Australian National University, Canberra, ACT 2601, Australia}
\email{\href{mailto:changwei.xiong@anu.edu.au}{changwei.xiong@anu.edu.au}}
\date{\today}
\thanks{}
\subjclass[2010]{{53C42}, {53C20}}
\keywords{gap theorem, minimal surface, free boundary, hyperbolic space, hemisphere}

\maketitle

\begin{abstract}
In this paper we provide a pinching condition for the characterization of the totally geodesic disk and the rotational annulus among minimal surfaces with free boundary in geodesic balls of three-dimensional hyperbolic space and hemisphere. The pinching condition involves the length of the second fundamental form, the support function of the surface, and a natural potential function in hyperbolic space and hemisphere.
\end{abstract}

\section{Introduction}

To characterize simple but important geometric objects by various standards has been a focus of attention for a long time. For example, F. J. Almgren \cite{Alm66} proved that the equator is the only  minimal surface in $\SS^3$ of genus $0$ (up to rigid motions in $\SS^3$). The Lawson conjecture confirmed by S. Brendle \cite{Bre13} states that the Clifford torus is the only embedded minimal torus in $\SS^3$. The Pinkall-Sterling conjecture proved by B. Andrews and H. Li \cite{AL15} says that the rotational torus is the only embedded torus with constant mean curvature in $\SS^3$. The Willmore conjecture due to Marques and Neves \cite{MN14} shows that the Clifford torus is the unique minimizer of the Willmore energy among surfaces in $\SS^3$ of genus at least $1$. In another direction, the equator and the Clifford tori $C_{m,n-m}:=S^m(\sqrt{\frac{m}{n}})\times S^{n-m}(\sqrt{\frac{n-m}{n}})$, $1\leq m\leq n-1$,  in $\SS^{n+1}$ can be characterized by the following gap theorem:

\begin{thm}[Chern-do~Carmo-Kobayashi \cite{CdK70}, Lawson \cite{Law69}, Simons \cite{Sim68} ]\label{thmA}
Let $\Sigma^n$ be a closed minimal hypersurface in the unit sphere $\SS^{n+1}$. Asssume that its second fundamental form $A$ satisfies
\begin{equation}
|A|^2\leq n.
\end{equation}
Then
\begin{enumerate}
  \item either $|A|^2\equiv 0$ and $\Sigma^n$ is an equator;
  \item or $|A|^2\equiv n$ and $\Sigma^n$ is one of Clifford tori $C_{m,n-m}$, $1\leq m\leq n-1$.
  \end{enumerate}
\end{thm}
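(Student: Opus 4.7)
The plan is to apply Simons' identity for the squared norm of the second fundamental form and to exploit the pinching via integration over the closed hypersurface. For a minimal $\Sigma^n \subset \SS^{n+1}$, a Bochner-type computation based on the Codazzi equation (which makes $\nabla_k h_{ij}$ fully symmetric when $H \equiv 0$) together with the Gauss equation in ambient curvature $1$ yields
\[
\tfrac{1}{2}\Delta |A|^2 \;=\; |\nabla A|^2 + |A|^2\bigl(n - |A|^2\bigr).
\]
Since $\Sigma$ is closed, the divergence theorem produces $0 = \int_\Sigma |\nabla A|^2\,d\mu + \int_\Sigma |A|^2(n - |A|^2)\,d\mu$, and under the pinching $|A|^2 \leq n$ both non-negative integrands must vanish pointwise. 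I thus obtain $\nabla A \equiv 0$ together with $|A|^2(n - |A|^2) \equiv 0$.

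By continuity of $|A|^2$ and connectedness of $\Sigma$, either $|A|^2 \equiv 0$ or $|A|^2 \equiv n$ on the whole of $\Sigma$. In the first case $\Sigma$ is totally geodesic, and a closed totally geodesic hypersurface of $\SS^{n+1}$ is an equator (one extends the tangent plane at any point via the ambient exponential map in $\R^{n+2}$). In the second case $A$ is a nonzero parallel tensor, so its principal curvatures $\lambda_1,\dots,\lambda_n$ are globally constant with $\sum_i \lambda_i = 0$ and $\sum_i \lambda_i^2 = n$, and the corresponding eigendistributions are parallel subbundles of $T\Sigma$. To recognize the Clifford tori I would next show that $A$ has exactly two distinct principal curvature values, of multiplicities $m$ and $n-m$ with $1 \leq m \leq n-1$; the two resulting trace identities then force $\lambda = \sqrt{(n-m)/m}$ and $\mu = -\sqrt{m/(n-m)}$. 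A de Rham splitting using the two parallel eigendistributions, combined with the fact that each leaf is totally umbilic in $\SS^{n+1}$ with constant mean curvature, identifies $\Sigma$ with $S^m(\sqrt{m/n}) \times S^{n-m}(\sqrt{(n-m)/n}) = C_{m,n-m}$ up to rigid motion.

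The main obstacle is the classification in the equality case $|A|^2 \equiv n$. The integrated Simons identity cleanly produces $\nabla A \equiv 0$, but deducing from parallelism of $A$, minimality, and $|A|^2 = n$ that $A$ carries exactly two distinct principal curvature values is the subtle algebraic step; once this is in hand, reconstructing the Clifford torus as a Riemannian product of round spheres is a fairly standard consequence of the de Rham decomposition theorem together with the classification of totally umbilic submanifolds of $\SS^{n+1}$.
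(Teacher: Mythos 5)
The paper does not prove Theorem \ref{thmA}; it is quoted as background with citations to Chern--do~Carmo--Kobayashi, Lawson and Simons, so your proposal can only be measured against the classical argument in those references --- and your route is exactly that argument. Simons' identity $\tfrac12\Delta|A|^2=|\nabla A|^2+|A|^2(n-|A|^2)$ for a minimal hypersurface of $\SS^{n+1}$ is correct, integration over the closed $\Sigma$ kills the Laplacian term, and the pinching makes both remaining integrands nonnegative, giving $\nabla A\equiv0$ and $|A|^2(n-|A|^2)\equiv0$; constancy of $|A|^2$ (forced by $\nabla A\equiv0$, or by your closed-set/connectedness argument) yields the dichotomy, and the totally geodesic case is handled correctly. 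This part is complete and sound.

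The one genuine gap is the step you yourself flag and then leave unexecuted: that in the case $|A|^2\equiv n$ the parallel tensor $A$ has exactly two distinct eigenvalues. You should close it explicitly; it is short. Since $\nabla A=0$, the Ricci identity gives $R(X,Y)\cdot A=0$, so each curvature operator $R(X,Y)$ preserves the eigendistributions of $A$; equivalently, the eigendistributions are parallel and the local de Rham splitting makes every mixed sectional curvature vanish. Hence for unit principal directions $e_i,e_j$ with $\lambda_i\neq\lambda_j$ one has $K(e_i,e_j)=0$, while the Gauss equation in ambient curvature $1$ gives $K(e_i,e_j)=1+\lambda_i\lambda_j$, so $\lambda_i\lambda_j=-1$ for any two distinct principal values. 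Three pairwise distinct values $\lambda,\mu,\nu$ would force $\lambda\mu=\lambda\nu=-1$ and hence $\mu=\nu$, a contradiction; since $A$ is trace-free with $|A|^2=n>0$, there are exactly two values, with multiplicities $m$ and $n-m$, and the two trace identities pin them down as you state. With that paragraph inserted, the remaining identification of $\Sigma$ with $C_{m,n-m}$ via the product splitting and the classification of totally umbilic submanifolds goes through as you indicate, and the proof agrees with the sources the paper cites.
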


In the setting where a minimal surface lies in a $3$-dimensional Euclidean unit ball $B^3$ with free boundary, the flat equatorial disk and the critical catenoid play analogous roles as the equator and the Clifford torus in $\SS^3$. Here the critical catenoid is a piece of a catenoid in $\R^3$ which intersects $\pt B^3$ orthogonally (free boundary). Due to their simple topology, these two examples in $B^3$ have got extensively studied these years. We refer the readers to \cite{AN16} for a nice account of the works concerning the flat equatorial disk and the critical catenoid. Or see e.g. \cite{Bre12,FL14,FS16,FS15,FS11,McG16,Nit85,RV95}. In particular, very recently Lucas~Ambrozio and Ivaldo~Nunes have obtained a gap theorem which is similar to Theorem \ref{thmA} in some sense.

\begin{thm}[Ambrozio-Nunes \cite{AN16}]\label{thmB}
Let $\Sigma$ be a compact free boundary minimal surface in Euclidean unit ball $B^3$. Assume that for all points $x\in\Sigma$,
\begin{equation}
|A|^2(x)\langle x,N(x)\rangle^2\leq 2,
\end{equation}
where $N(x)$ denotes a unit normal vector at the point $x\in \Sigma$ and $A$ denotes the second fundamental form of $\Sigma$. Then
\begin{enumerate}
  \item either $|A|^2\langle x,N(x)\rangle^2\equiv 0$ and $\Sigma$ is a flat equatorial disk;
  \item or $|A|^2(p)\langle p,N(p)\rangle^2= 2$  at some point $p\in \Sigma$ and $\Sigma$ is a critical catenoid.
\end{enumerate}
\end{thm}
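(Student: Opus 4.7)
The plan is to run a strong maximum principle argument on the pinching function $\phi := |A|^2\langle x, N\rangle^2$ itself, exploiting the fact that the free boundary condition makes $\phi$ vanish on $\pt\Sigma$.

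First, I would assemble the basic identities. Writing $u := \langle x, N\rangle$ for the support function, the minimality of $\Sigma$ together with $\Delta x = 0$ in $\R^3$ yields $\Delta u = -|A|^2 u$. The free boundary condition says that along $\pt\Sigma$ the outer conormal of $\Sigma$ coincides with the position vector $x$, so $x$ is tangent to $\Sigma$ there and hence $u \equiv 0$ and $\phi \equiv 0$ on $\pt\Sigma$. Because $\Sigma$ is two-dimensional and minimal, its principal curvatures are $\pm\lambda$ and $|A(v)|^2 = \tfrac12 |A|^2 |v|^2$ for every tangent vector $v$; applied to $v = x^T$ this yields the useful identity $|\nabla u|^2 = \tfrac12 |A|^2(|x|^2 - u^2)$. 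Simons' identity specializes to $\Delta |A|^2 = 2|\nabla A|^2 - 2|A|^4$, which I would pair with the refined Kato inequality for traceless Codazzi tensors in dimension two.

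Second, I would compute $\Delta \phi$ by expanding $\Delta(|A|^2 u^2)$ with the identities above, controlling the cross term $4u\langle\nabla|A|^2,\nabla u\rangle$ by Cauchy-Schwarz, and using $\phi \leq 2$. The goal is a differential inequality of the shape
\begin{equation*}
\Delta\phi + \langle X, \nabla\phi\rangle \,\geq\, c(x)\,\phi\,(\phi - 2) + R,
\end{equation*}
where $R \geq 0$ has an explicit vanishing locus and $c(x) \leq 0$. Since $\phi|_{\pt\Sigma}=0$ and $\phi \leq 2$, either $\phi \equiv 0$ or $\phi$ attains a positive interior maximum. In the degenerate case, multiplying $\Delta u = -|A|^2 u$ by $u$ and integrating by parts (the boundary term vanishing since $u|_{\pt\Sigma}=0$) gives $\int_\Sigma |\nabla u|^2 = \int_\Sigma |A|^2 u^2 = 0$, so $u \equiv 0$ on $\Sigma$; thus $\Sigma$ is a planar region through the origin meeting $\pt B^3$ orthogonally, i.e., the flat equatorial disk.

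The rigidity in the non-degenerate case is the main obstacle. At an interior maximum $p_0$ with $\phi(p_0) > 0$, the conditions $\nabla\phi(p_0)=0$ and $\Delta\phi(p_0)\leq 0$, together with the differential inequality, should force $\phi(p_0) = 2$, equality in the Kato inequality, and $\nabla u$ aligned with an eigenvector of the shape operator. I would then propagate these equalities by the strong maximum principle. Equality in the refined Kato inequality in dimension two is known to force the eigenframe of $A$ to be parallel along its own flow lines, which for a connected minimal surface in $\R^3$ is equivalent to being rotationally symmetric. Since the only nonflat rotationally symmetric minimal surfaces in $\R^3$ are catenoids, and the free boundary condition forces the neck radius to be the critical one, $\Sigma$ must be the critical catenoid.
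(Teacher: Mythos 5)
Your preliminary identities are fine ($u=\langle x,N\rangle$ satisfies $\Delta u=-|A|^2u$, $u\equiv 0$ on $\partial\Sigma$, $\nabla u=-A(x^T)$, hence $|\nabla u|^2=\tfrac12|A|^2(|x|^2-u^2)$), and your treatment of the degenerate case $\phi\equiv 0$ is correct. But the core of the argument is a differential inequality that you never derive, and in fact no inequality of the proposed shape can hold. On the critical catenoid the function $\phi=|A|^2\langle x,N\rangle^2$ equals $2$ exactly on the neck circle and is strictly less than $2$ elsewhere, decreasing to $0$ at $\partial\Sigma$ (this monotonicity is verified explicitly, in the hyperbolic analogue, in Section \ref{sec2} of the paper). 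If one had $\Delta\phi+\langle X,\nabla\phi\rangle\geq c\,\phi(\phi-2)+R$ with $c\leq 0$ and $R\geq 0$, then $w=2-\phi\geq 0$ would satisfy $\Delta w+\langle X,\nabla w\rangle+c\phi\,w\leq 0$ with zeroth-order coefficient $c\phi\leq 0$, and the strong maximum principle applied at a neck point (an interior zero of $w$) would force $\phi\equiv 2$ --- contradicted by the extremal example itself. So the mechanism you are hoping for is structurally ruled out, not merely unverified. The final rigidity step is also unavailable: for a two-dimensional minimal surface in $\R^3$ the traceless second fundamental form is essentially a holomorphic quadratic differential, so the refined Kato inequality $|\nabla A|^2\geq 3\,|\nabla|A||^2$ is an \emph{identity} away from umbilic points on every such surface; equality in it carries no information and certainly does not single out surfaces of revolution.

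The actual proof (Ambrozio--Nunes, mirrored in Sections \ref{sec3}--\ref{sec4} here) runs a maximum-principle-type argument on a different function: $\varphi(x)=|x|^2$, whose Hessian on $\Sigma$ is $2\left\langle Y+\langle x,N\rangle A(Y),Z\right\rangle$ and is therefore positive semi-definite precisely under the pinching $|A|^2\langle x,N\rangle^2\leq 2$ (this is the Euclidean case of Lemma \ref{lem1}). Convexity of $\varphi$, together with $\nabla\varphi$ pointing in the conormal direction on $\partial\Sigma$, forces the minimum set $\mathcal{C}$ of $\varphi$ to be either a single interior point or a simple closed interior geodesic. In the first case a Nitsche--Souam--Ros type theorem gives the equatorial disk; in the second, $\mathrm{Hess}\,\varphi$ is degenerate along the geodesic, which is what detects $\phi=2$ there (no PDE for $\phi$ is needed), and the rotational symmetry is obtained by producing a Jacobi function $u_a=\langle x\wedge a,N\rangle$ that vanishes together with its gradient along the geodesic and invoking Cheng's theorem on nodal sets to conclude $u_a\equiv 0$. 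You would need to replace both your differential inequality and your Kato-equality rigidity step with arguments of this kind for the proof to close.
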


In this paper we aim at proving a similar gap theorem in the $3$-dimensional hyperbolic space $\H^3$ and hemisphere $\SS^3_+$. To that end we first introduce some notations.

Let $M^3$ be a $3$-dimensional warped product Riemannian manifold $[0,R_\infty)\times \SS^2$ equipped with the metric (see \cite{LWX14})
\begin{equation}
g=dr^2+\lambda(r)^2 g_{\SS^2},
\end{equation}
where $\lambda(r)$ and $R_\infty$ are given by
\begin{equation*}
\lambda(r)=
\begin{cases}
r,& R_\infty=+\infty,\\
\sinh r,&R_\infty=+\infty,\\
\sin r,&R_\infty=\pi/2.
\end{cases}
\end{equation*}
That is, $M^3$ is Euclidean space $\R^3$, hyperbolic space $\H^3$ or hemisphere $\SS^3_+$. For any surface $\Sigma$ in $M^3$, there are two important functions, i.e. the potential function $\lambda'$ and the support function $\langle X,N\rangle$. Here $X=\lambda(r)\pt_r$ is the natural conformal vector field on $M^3$ and $N$ is a unit normal of the surface. For example, these two functions appear in the well-known Minkowski formulas.

Let $B_R=[0,R)\times \SS^2\subset M^3$ be the geodesic ball in $M^3$ with radius $R$, and $\Sigma^2$ be a compact free boundary minimal surface in $B_R$. Now we can state our main result as follows:

\begin{thm}\label{thm1}
Let $\Sigma$ be a compact free boundary minimal surface in the geodesic ball $B_R\subset M^3$ with radius $R<R_\infty$. Assume that for all points $x$ in $\Sigma$,
\begin{equation}\label{cond}
\dfrac{|A|^2\langle N(x),X\rangle^2}{(\lambda')^2}\leq 2,
\end{equation}
where $N(x)$ denotes a unit normal vector at the point $x\in \Sigma$ and $A$ denotes the second fundamental form of $\Sigma$. Then
\begin{enumerate}
  \item either $|A|^2\langle N(x),X\rangle^2\equiv 0$ and $\Sigma$ is a totally geodesic disk;
  \item or $\dfrac{|A|^2\langle N(p),X\rangle^2}{(\lambda')^2}=2$  at some point $p\in \Sigma$ and $\Sigma$ is a rotational annulus.
\end{enumerate}
\end{thm}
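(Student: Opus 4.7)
The plan is to adapt the maximum-principle strategy of Ambrozio-Nunes behind Theorem~\ref{thmB} to the warped-product setting $M^{3}$. Three structural facts drive the argument: the conformal Killing equation $\nabla X=\lambda' g$ for $X=\lambda(r)\pt_r$ on $M^{3}$; the static-type identity $\nabla^{2}\lambda'=-K\lambda' g$ for the potential $\lambda'$, where $K\in\{-1,0,+1\}$ is the constant sectional curvature of $M^{3}$; and the fact that the free boundary condition forces $\langle N,X\rangle\equiv 0$ along $\pt\Sigma$, because $X$ is normal to $\pt B_{R}$ along $\pt B_{R}$ while $\Sigma$ meets $\pt B_{R}$ orthogonally.

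Combining these facts with the Codazzi equation and the minimality of $\Sigma$, I first derive the identities
\begin{align*}
\Delta_{\Sigma}\langle N,X\rangle &= -|A|^{2}\,\langle N,X\rangle,\\
\Delta_{\Sigma}\lambda' &= -2K\,\lambda',
\end{align*}
together with the Hessian formula $(\nabla^{2}_{\Sigma}\Phi)_{ij}=\lambda' g_{ij}+\langle N,X\rangle h_{ij}$ for the radial primitive $\Phi$ with $\Phi'(r)=\lambda(r)$, the warped-product analog of $|x|^{2}/2$. A direct consequence is that the quotient $u=\langle N,X\rangle/\lambda'$ obeys the linear elliptic equation
$$\Delta_{\Sigma} u + 2\langle\nabla\log\lambda',\nabla u\rangle + (|A|^{2}-2K)\,u = 0$$
with Dirichlet datum $u|_{\pt\Sigma}=0$, and the pinching~\eqref{cond} reads $|A|^{2}u^{2}\leq 2$.

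With these identities in hand, I would construct an Ambrozio-Nunes-type auxiliary function $F$ on $\Sigma$, built out of $u$, $\lambda'$ and $\Phi(R)-\Phi(r)$, so that (a) $F\leq 0$ under the pinching, (b) $F=0$ on $\pt\Sigma$, and (c) $F$ satisfies an elliptic inequality with drift to which the strong maximum principle applies. The resulting dichotomy is the expected one: either the support function $\langle N,X\rangle$ vanishes identically, in which case $\pt_r$ is tangent to $\Sigma$ everywhere and $\Sigma$ is forced to be the totally geodesic disk through the center of $B_{R}$; or $F$ attains its maximum $0$ at an interior point $p$ where the pinching is saturated, and the strong maximum principle at $p$ (combined with the Hopf boundary lemma along $\pt\Sigma$) forces $\Sigma$ to be invariant under a one-parameter subgroup of isometries of $M^{3}$, hence a rotational annulus.

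The main obstacle is the equality case: finding the correct function $F$ so that the elliptic inequality becomes an equality exactly on the model rotational annulus, and then extracting the rotational symmetry from the saturation of the inequality. Two further subtleties are worth flagging. First, in the hemisphere case $K=+1$ the zero-order coefficient $|A|^{2}-2K$ can change sign, so the maximum-principle step must be arranged carefully so that the pinching is used on the correct side. Second, the ``rotational annulus'' in the statement must be defined and characterized independently---for instance, by establishing existence and uniqueness of a free boundary minimal annulus of revolution in $B_{R}\subset\H^{3}$ and $B_{R}\subset\SS^{3}_{+}$---so that the conclusion of case~(2) is unambiguous.
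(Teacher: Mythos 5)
Your preparatory identities are mostly sound, and your Hessian formula for the radial primitive $\Phi$ with $\Phi'=\lambda$, namely $\nabla^{2}_{\Sigma}\Phi=\lambda'\,g+\langle N,X\rangle A$, is exactly the right convexity statement: its eigenvalues are $\lambda'\pm\frac{|A|}{\sqrt{2}}|\langle N,X\rangle|$, so it is positive semi-definite precisely under \eqref{cond}. It is equivalent to the paper's Lemma~\ref{lem1} in the hyperbolic case and already incorporates the correction $\Phi(s)=1-\sqrt{1-s}$ that the paper needs in the spherical Lemma~\ref{lem2} (in both formulations the function is $1-\cos r$ there). But you then abandon this ingredient and pivot to an unspecified auxiliary function $F$ with a sign, a boundary condition, and ``an elliptic inequality to which the strong maximum principle applies.'' That is not the Ambrozio--Nunes mechanism, and $F$ is never produced, so the core of the argument is missing. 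What the paper (following \cite{AN16}) actually does with the convexity is geometric: the set $\mathcal{C}$ of minima of $r$ on $\Sigma$ is totally convex, hence is either a single interior point --- in which case $\Sigma$ is a disk and the Nitsche-type uniqueness theorems of Souam and Ros--Souam \cite{Sou97,RS97} identify it as totally geodesic --- or a simple closed interior geodesic, in which case $\Sigma$ is an annulus (Proposition~\ref{prop1}). Your stated dichotomy (``either $\langle N,X\rangle\equiv 0$ or the pinching is saturated at an interior point'') is also not exhaustive: the pinching can be strict everywhere with $\langle N,X\rangle\not\equiv 0$, and this case must be shown to yield the disk.

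The more serious gap is the rigidity step. No strong maximum principle or Hopf lemma applied to a scalar inequality at the saturation point $p$ will ``force $\Sigma$ to be invariant under a one-parameter subgroup of isometries''; you need an actual quantity whose vanishing encodes rotational symmetry. The paper's mechanism is: in the equality case $\mathcal{C}$ is a closed geodesic $\gamma$ of $\Sigma$ lying on a geodesic sphere $S_{r_0}$ with $T\Sigma=TS_{r_0}$ along $\gamma$, hence a great circle; passing to the Minkowski model one forms $u=\langle\phi\wedge\pt_3\wedge\pt_4,N\rangle_1$, the normal component of the rotation Killing field fixing the plane of $\gamma$, which satisfies the Jacobi equation $\Delta u+(|A|^2-2)u=0$; one checks that $u$ and $\nabla u$ both vanish along $\gamma$, so the nodal set of $u$ has non-isolated critical points, and Cheng's theorem \cite{Che76} forces $u\equiv 0$, i.e.\ $\Sigma$ is rotational. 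This combination of a Killing--Jacobi field with unique-continuation-type input on nodal sets is the idea your sketch lacks, and I do not see how to reach the conclusion without it or a substitute of comparable strength. (Your final point about independently establishing existence of the model rotational annuli is fair; the paper handles it in Section~\ref{sec2} and in the remark at the end of Section~\ref{sec4}.)
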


The proof of Theorem \ref{thm1} proceeds parallel to that of Theorem \ref{thmB}. Firstly we deal with the hyperbolic case, that is, $M^3=\H^3$. It is our key observation that the length square of the natural conformal vector $X$ can define a convex function on the surface $\Sigma$ under the pinching condition \eqref{cond}, which is the content of Lemma \ref{lem1}. Then we divide the condition \eqref{cond} into two cases: either it is a strict inequality on $\Sigma\setminus \pt\Sigma$, or it is an equality for some $p\in \Sigma\setminus \pt\Sigma$. In the latter case we can use the Minkowski space model for $\H^3$ to prove the surface is rotational, which is another difference from that in \cite{AN16}. Secondly for the spherical case $M^3=\SS^3_+$, after a little bit more work, we are able to find a convex function on $\Sigma$ under \eqref{cond}, see Lemma \ref{lem2}. Then the remaining is similar to that in hyperbolic case.

This paper is organised as follows. In Section \ref{sec2} we check that in hyperbolic case the rotational minimal annulus with free boundary in $B_R\subset \H^3$ indeed satisfies the condition \eqref{cond}, where the Poincar\'{e} disk model for $\H^3$ is convenient. Then in Section \ref{sec3} as mentioned above we prove Theorem \ref{thm1} for hyperbolic case. Finally in Section \ref{sec4} we provide the key Lemma \ref{lem2} in spherical case to finish the proof of Theorem \ref{thm1} for spherical case.


\begin{ack}
The authors would like to thank Professor Ben~Andrews for his interest in this work. The second author is also grateful to Professor Jaigyoung~Choe for discussions on minimal surfaces in hyperbolic space at the workshop on Nonlinear and Geometric Partial Differential Equations at Kioloa Campus of ANU, 2016, Australia. The first author was supported by NSFC grant No.11671224. The second author was supported by a postdoctoral fellowship funded via ARC Laureate Fellowship FL150100126.
\end{ack}

\section{The two examples for hyperbolic case}\label{sec2}

This section is devoted to examples in hyperbolic space $\H^3$ which satisfy the condition \eqref{cond}. Obviously the totally geodesic disk satisfies \eqref{cond}. While for the rotational minimal annulus with free boundary in $B_R\subset \H^3$, we verify \eqref{cond} as follows.

First we use the Minkowski space model for $\H^3\subset \R^4_1$, that is,
\begin{equation}
\H^3=\{y\in \R^4_1: y^4=\sqrt{(y^1)^2+(y^2)^2+(y^3)^2+1}\},
\end{equation}
with the metric induced from the Lorentz inner product $\langle \cdot,\cdot\rangle_1$ in $\R^4_1$. Then as in \cite{Mor81, dD83}, the rotational catenoid we consider can be parametrized as the map $F_a:\R\times \SS^1\rightarrow \H^3$ given by
\begin{align*}
F_a(s,\theta)&=\left(\sqrt{a\cosh 2s-\frac{1}{2}}\cos \theta,\sqrt{a\cosh 2s-\frac{1}{2}}\sin \theta,\right.\\
&\left.\sqrt{a\cosh 2s+\frac{1}{2}}\sinh \phi(s),\sqrt{a\cosh 2s+\frac{1}{2}}\cosh \phi(s)\right),
\end{align*}
where $\phi(s)$ is the integral
\begin{equation}
\phi(s)=\sqrt{a^2-\frac{1}{4}}\int_0^s \frac{1}{(a\cosh 2t+\frac{1}{2})\sqrt{a\cosh 2t-\frac{1}{2}}}dt,
\end{equation}
and $a>\frac{1}{2}$ is a constant. Note that $F_a$ is rotational around the plane generated by coordinate vectors $\pt_3$ and $\pt_4$, and $s$ is the arclength parameter of the generating curve.

Using this model, we can prove for any geodesic ball $B_R\subset \H^3$ centered at $(0,0,0,1)$, there exists a unique $a=a(R)$ such that $F_{a(R)}$ is a minimal annulus with free boundary in $B_R$. In fact, it suffices to prove, for any $a>\frac{1}{2}$ and fixed $\theta$ (say $0$), the tangent straight line $\{F(s)+kF'(s):k\in\R\}$ in $\R^4_1$ will intersect with the $y^4$-axis for some $s_0>0$. Then $F(s_0)$ is on the boundary of the geodesic ball. The proof is elementary, which we omit here.

However, under this model it is not easy to see that \eqref{cond} holds on $\Sigma$. To that end, we shall use the Poincar\'{e} disk model for $\H^3$. That is,
\begin{equation}
\H^3=\{z\in \R^3:\sum_{i=1}^3 (z^i)^2<1\},
\end{equation}
with the conformal flat metric
\begin{equation}
g=\rho(z)^2g_0=\frac{4}{(1-\sum_{i=1}^3(z^i)^2)^2}\sum_{i=1}^3 (dz^i)^2.
\end{equation}
Here $\rho(z)=\frac{2}{1-\sum_{i=1}^3(z^i)^2}$ and $g_0=\sum_{i=1}^3 (dz^i)^2$. Now consider the rotational minimal surface $\Sigma$ in $\H^3$ given by
\begin{equation}
z(t,\theta)=(t,f(t)\cos \theta,f(t)\sin \theta),\quad -1\leq -\bar{a}\leq t \leq \bar{a}\leq 1.
\end{equation}
As a surface in Euclidean space, $\Sigma$ has a unit normal
\begin{equation}
\bar{N}=\frac{(f',-\cos \theta,-\sin \theta)}{\sqrt{1+(f')^2}},
\end{equation}
and two principal curvatures
\begin{equation}
\bar{\kappa}_1=-\frac{f''}{(1+(f')^2)^{\frac{3}{2}}},\quad \bar{\kappa}_2=\frac{1}{f(1+(f')^2)^{\frac{1}{2}}}.
\end{equation}
Then as a surface in hyperbolic space, i.e. with a conformal metric $g=\rho^2g_0$, the surface $\Sigma$ has two principal curvatures (see e.g. \cite[Lemma 10.1.1]{Lop13})
\begin{equation}
\kappa_i=\frac{\bar{\kappa}_i}{\rho}-\frac{\bar{N}(\rho)}{\rho^2}, \quad i=1,2,
\end{equation}
here $\bar{N}(\rho)$ means the directional derivative, namely $\bar{N}(\rho)=g_0(\bar{N},\nabla_{g_0}\rho)$.

With these preparation, since $\Sigma$ is a minimal surface in $\H^3$, we can derive
\begin{equation}\label{solution}
\frac{f''}{1+(f')^2}=\frac{1}{f}+4\frac{f-tf'}{1-t^2-f^2},
\end{equation}
with initial data
\begin{equation}
f(0)=c\in (0,1),\quad f'(0)=0.
\end{equation}
Then for a geodesic ball $B_R\subset \H^3$ centered at the origin with hyperbolic radius $R$, we can find a unique $c=c(R)$ such that the solution curve $f(t)$ ($t\in [-a,a]$) generates a minimal annulus with free boundary in $B_R$. Moreover, the free boundary condition requires that $a$ is the first zero of the function $f(t)-tf'(t)$.

Now we are in a position to verify \eqref{cond}. First note that \eqref{cond} is equivalent to
\begin{equation}
\kappa_2\cdot (-<N,\pt_r>)\cdot \tanh r\leq 1,
\end{equation}
where $r$ is the geodesic distance from $z=(t,f(t)\cos \theta,f(t)\sin \theta)$ to the origin with respect to $g$. We can compute it to get
\begin{equation}
r=\ln\frac{1+|z|_{g_0}}{1-|z|_{g_0}},
\end{equation}
which implies
\begin{equation}
\tanh r=\frac{2|z|_{g_0}}{1+|z|_{g_0}^2}=\frac{2\sqrt{t^2+f^2}}{1+t^2+f^2}.
\end{equation}
Meanwhile,
\begin{align*}
<N,\pt_r>&=g_0(\bar{N},\frac{z}{|z|_{g_0}})=\frac{tf'-f}{\sqrt{1+(f')^2}\sqrt{t^2+f^2}}.
\end{align*}
In summary, we have
\begin{align}
\kappa_2&\cdot (-<N,\pt_r>)\cdot \tanh r=\left(\frac{1-t^2-f^2}{2f\sqrt{1+(f')^2}}-\frac{tf'-f}{\sqrt{1+(f')^2}}\right)\nonumber\\
&\quad \times \frac{f-tf'}{\sqrt{1+(f')^2}\sqrt{t^2+f^2}}\times \frac{2\sqrt{t^2+f^2}}{1+t^2+f^2}\nonumber\\
&=\frac{(1+f^2-t^2-2tff')(f-tf')}{f(1+(f')^2)(1+t^2+f^2)}.\label{eq1}
\end{align}
In view of the equation \eqref{solution}, we know that on $(0,a)$
\begin{equation*}
f'\geq 0,\quad f''\geq 0,\quad f-tf'>0,\text{ and }1+f^2-t^2-2tff'>0.
\end{equation*}
Furthermore, it is straightforward to check the numerator in \eqref{eq1} is decreasing, while the denominator is increasing. As a consequence, $-\kappa_2\cdot <N,\pt_r>\cdot \tanh r$ is decreasing from $1$ to $0$. This finishes the verification of \eqref{cond}.

\section{Proof of Theorem \ref{thm1} for hyperbolic case}\label{sec3}

In this section we assume $M^3=\H^3$. Firstly we prove a useful lemma.
\begin{lem}\label{lem1}
Let $\Sigma^2$ be a free boundary minimal surface in $B_R\subset \H^3$. Let $\varphi$ be the function defined by
\begin{equation}
\varphi(x)=\lambda(r(x))^2,\quad x\in \Sigma.
\end{equation}
Then we have
\begin{enumerate}
  \item[(a)] $\nabla^\Sigma \varphi(x)=2\lambda\lambda' \pt_r$ for all $x\in \pt \Sigma$.
  \item[(b)] For each $x\in \Sigma$, when $\dfrac{|A|^2\langle N(x),X\rangle^2}{(\lambda')^2}\leq 2$, the matrix $Hess_\Sigma \varphi(x)$ is positive semi-definite.
\end{enumerate}
\end{lem}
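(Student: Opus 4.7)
\medskip
\textbf{Proof plan.}

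The key computational fact in the hyperbolic (indeed any constant curvature) warped product is that the vector field $X=\lambda(r)\partial_r$ is a conformal Killing field with $\bar\nabla_Y X=\lambda' Y$ for every $Y\in TM$, where $\bar\nabla$ is the ambient connection. Since $\nabla\varphi=2\lambda\lambda'\,\partial_r=2X$ (as $\varphi=\lambda^2$ depends only on $r$ and $|\partial_r|=1$), this identity immediately gives the ambient Hessian
\begin{equation*}
\mathrm{Hess}^M\varphi(Y,Z)=2\langle\bar\nabla_Y X,Z\rangle=2\lambda'\,g(Y,Z).
\end{equation*}

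For part (a), I would simply split $\nabla\varphi=2\lambda\lambda'\,\partial_r$ into tangential and normal parts along $\Sigma$. The free boundary condition says $N$ is tangent to $\partial B_R$ at $\partial\Sigma$, hence perpendicular to the outward radial field $\partial_r$ there. Therefore $\langle\nabla\varphi,N\rangle=2\lambda\lambda'\langle\partial_r,N\rangle=0$ on $\partial\Sigma$, so $\nabla^\Sigma\varphi=\nabla\varphi=2\lambda\lambda'\,\partial_r$ at every boundary point.

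For part (b), I would use the standard comparison between the intrinsic and ambient Hessian of a function restricted to a hypersurface: for $Y,Z\in T_x\Sigma$,
\begin{equation*}
\mathrm{Hess}_\Sigma\varphi(Y,Z)=\mathrm{Hess}^M\varphi(Y,Z)+\langle\nabla\varphi,N\rangle\,h(Y,Z),
\end{equation*}
where $h$ is the scalar second fundamental form (with the sign convention inherited from the pinching in \eqref{cond}). Plugging in the formulas above,
\begin{equation*}
\mathrm{Hess}_\Sigma\varphi(Y,Z)=2\lambda'\,g_\Sigma(Y,Z)+2\langle X,N\rangle\,h(Y,Z).
\end{equation*}
Since $\Sigma^2$ is a minimal surface, its two principal curvatures are $\pm\kappa$ with $|A|^2=2\kappa^2$. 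Diagonalising in a principal frame, $\tfrac12\mathrm{Hess}_\Sigma\varphi$ has eigenvalues $\lambda'\pm\langle X,N\rangle\kappa$, so positive semi-definiteness is equivalent to
\begin{equation*}
(\lambda')^2\ \geq\ \kappa^2\langle X,N\rangle^2\ =\ \tfrac12\,|A|^2\langle X,N\rangle^2,
\end{equation*}
which is exactly the pinching hypothesis $|A|^2\langle N,X\rangle^2/(\lambda')^2\leq 2$.

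There is no real obstacle here; the content is the observation that $\varphi=\lambda^2$ has ambient Hessian proportional to $g$, which transforms the pinching inequality directly into nonnegativity of the intrinsic Hessian. The only points to be handled carefully are the sign convention for $h$ (so the cross term $\langle X,N\rangle h$ appears with the right sign) and the use of minimality to force the two principal curvatures to be opposite, which is what makes the eigenvalues of $\mathrm{Hess}_\Sigma\varphi$ symmetric around $2\lambda'$ and hence controllable by the single scalar quantity $|A|^2\langle X,N\rangle^2$.
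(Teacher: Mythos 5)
Your overall strategy---exploit the conformality $D_YX=\lambda'Y$ to compute the Hessian of $\varphi=|X|^2$, restrict to $\Sigma$, and diagonalize using minimality---is the same as the paper's, and your part (a) and the final eigenvalue reduction are sound. However, the central identity you use for part (b) is false in $\H^3$. From $\varphi=\lambda^2$ one gets $\nabla\varphi=2\lambda\lambda'\pt_r=2\lambda'X$, \emph{not} $2X$; these coincide only when $\lambda'\equiv1$, i.e.\ in the Euclidean setting of Ambrozio--Nunes. Consequently the ambient Hessian is not $2\lambda'g$: differentiating $2\lambda'X$ gives
\[
Hess_M\varphi(Y,Z)=2\lambda\lambda''\langle Y,\pt_r\rangle\langle Z,\pt_r\rangle+2(\lambda')^2\langle Y,Z\rangle
\]
(the function whose ambient Hessian is conformal to $g$ is the primitive of $\lambda$, namely $\cosh r$, not $\sinh^2r$). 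The restriction to $\Sigma$ therefore reads
\[
Hess_\Sigma\varphi(Y,Z)=2\lambda\lambda''\langle Y,\pt_r\rangle\langle Z,\pt_r\rangle+2(\lambda')^2\langle Y,Z\rangle+2\lambda'\langle X,N\rangle\langle A(Y),Z\rangle,
\]
which is the paper's identity \eqref{eq-Hess}; both terms in your formula are missing a factor of $\lambda'$, and the rank-one term $2\lambda\lambda''\langle\cdot,\pt_r\rangle\langle\cdot,\pt_r\rangle$ is absent altogether.

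Your conclusion survives this error only because the two mistakes are benign: the correct Hessian equals $\lambda'>0$ times the bilinear form you wrote down, plus the extra rank-one term, which is positive semi-definite since $\lambda\lambda''=\sinh^2r\ge0$. Hence the eigenvalue analysis on the part $2(\lambda')^2g+2\lambda'\langle X,N\rangle A$ yields exactly the pinching \eqref{cond} as a sufficient condition, as in the paper. (Because of the discarded nonnegative term, the pinching is sufficient for, but not in general equivalent to, positive semi-definiteness, so your claimed equivalence should also be weakened.) You should fix the identities; note that the sign $\lambda\lambda''\ge0$ is precisely where the hyperbolic case is easier than the spherical one, in which $\lambda\lambda''\le0$ forces the paper to replace $\varphi$ by $\Phi(\varphi)=1-\sqrt{1-\varphi}$ in Lemma \ref{lem2}.
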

\begin{proof}
Note that $X=\lambda(r)\pt_r$ is a conformal vector field, i.e. for any vector field $v\in TM^3$ we have (see e.g. \cite{Bre13a,LWX14})
\begin{equation}
D_v(\lambda(r)\pt_r)=\lambda'(r)v.
\end{equation}
Here $D$ is the connection of ambient space $M^3$. And we will use $\nabla=\nabla^\Sigma$ to denote the connection of $\Sigma$.

Note that $\varphi(x)=|X|^2$. Let $e_1,e_2\in T_x\Sigma$ be an orthonormal basis for $T_x\Sigma$. Then for (a),
\begin{align*}
\nabla \varphi(x)&=(e_i|X|^2) e_i\\
           &=2\langle D_{e_i}X,X\rangle e_i\\
           &=2\langle \lambda' e_i,\lambda\pt_r\rangle e_i\\
           &=2\lambda\lambda' \pt_r^T,
\end{align*}
where $(\cdot)^T$ denotes the orthogonal projection onto $T_x\Sigma$. Now along $\pt\Sigma$, the vector $\pt_r$ is the conormal of $\pt\Sigma$ in $\Sigma$. So we have $\nabla \varphi(x)=2\lambda\lambda' \pt_r$ for all $x\in \pt\Sigma$, which proves (a).

For (b), taking any $Y,Z\in T_x\Sigma$, we have
\begin{align}
Hess_\Sigma& \varphi(x)(Y,Z)=YZ(|X|^2)-(\nabla_YZ)(|X|^2)\nonumber\\
                     &=2Y\langle \lambda' Z,X\rangle-2\langle \lambda' \nabla_YZ,X\rangle\nonumber\\
                     &=2\lambda'' \langle Y,\pt_r\rangle\langle Z,X\rangle+2\langle \lambda' D_YZ,X\rangle+2(\lambda')^2\langle Z,Y\rangle - 2\langle \lambda' \nabla_YZ,X\rangle\nonumber\\
                     &=2\lambda \lambda''\langle Y,\pt_r\rangle \langle Z,\pt_r\rangle+2(\lambda')^2\langle Z,Y\rangle+2\lambda'\langle A(Y),Z\rangle \langle N(x),X\rangle\nonumber\\
                     &=2\lambda \lambda''\langle Y,\pt_r\rangle \langle Z,\pt_r\rangle+2(\lambda')^2\left\langle Y+\frac{1}{\lambda'} A(Y)\langle N(x),X\rangle,Z\right\rangle.\label{eq-Hess}
\end{align}
Note that $\lambda''(r)=\sinh r\geq 0$ and the second fundamental form $A$ has eigenvalues $\pm |A|/\sqrt{2}$. Therefore when $\dfrac{|A|^2\langle N(x),X\rangle^2}{(\lambda')^2}\leq 2$, the matrix $Hess_\Sigma \varphi(x)$ is positive semi-definite.

\end{proof}

Then as in \cite{AN16}, we can obtain the following proposition. The proof for it is the same as in \cite{AN16}. We omit the details here.

\begin{prop}\label{prop1}
Let $\Sigma$ be a compact free boundary minimal surface in $B_R\subset\H^3$. Define
\begin{equation}
\mathcal{C}=\{p\in \Sigma: r(p)=\min_{x\in \Sigma} r(x)\}.
\end{equation}
If  \; $\dfrac{|A|^2\langle N(x),X\rangle^2}{(\lambda')^2}\leq 2$ on $\Sigma$, then
\begin{enumerate}
\item[(a)] either $\mathcal{C}$ contains a single point $p\in \Sigma\setminus \pt\Sigma$, in which case $\Sigma$ must be a totally geodesic disk;
\item[(b)] or $\mathcal{C}$ is a simple closed geodesic in $\Sigma\setminus \pt\Sigma$ and $\Sigma$ is homeomorphic to an annulus.
\end{enumerate}
\end{prop}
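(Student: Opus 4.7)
The plan is to adapt the Ambrozio--Nunes argument, with the convex function $\varphi(x)=\lambda(r(x))^2$ from Lemma \ref{lem1} taking over the role that $|x|^2$ plays in the Euclidean setting. First I would note that, since $\pt\Sigma\subset\pt B_R$, one has $r\equiv R$ on $\pt\Sigma$ and $r<R$ in the interior; as $r$ is continuous on the compact set $\Sigma$, the set $\mathcal{C}$ is a nonempty compact subset of $\Sigma\setminus\pt\Sigma$, and since $\lambda$ is strictly increasing on $[0,R_\infty)$, $\mathcal{C}$ coincides with the minimum set of $\varphi$.

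Second, I would establish geodesic convexity of $\mathcal{C}$: if $p,q\in\mathcal{C}$ and $\gamma$ is an intrinsic geodesic of $\Sigma$ joining them, then $\varphi\circ\gamma$ is a convex function on an interval with equal minima at its endpoints, so it is constant, forcing $\gamma\subset\mathcal{C}$. Next I would rule out a $2$-dimensional piece of $\mathcal{C}$: otherwise $r$ would be constant on an open subset $U\subset\Sigma$, which would force $\Sigma$ to lie locally on a geodesic sphere of $\H^3$; but geodesic spheres have nonzero mean curvature, so analyticity of minimal surfaces and unique continuation yield a contradiction. Therefore $\mathcal{C}$ is either a single point or a closed $1$-dimensional subset, and in the latter case geodesic convexity upgrades it to a smooth embedded geodesic with no interior endpoints, hence a simple closed geodesic of $\Sigma$.

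Third, the topology comes from the gradient flow of $\varphi$. By Lemma \ref{lem1}(a), $\nabla^\Sigma\varphi=2\lambda\lambda'\pt_r$ along $\pt\Sigma$ is a positive multiple of the outward conormal, so the negative gradient flow of $\varphi$ pushes $\pt\Sigma$ strictly into the interior and every trajectory converges to $\mathcal{C}$ by convexity. This yields a deformation retraction of $\Sigma$ onto $\mathcal{C}$: when $\mathcal{C}=\{p\}$, $\Sigma$ is contractible, so as a compact surface with boundary it is a disk; when $\mathcal{C}$ is a simple closed curve, $\Sigma$ is homotopy equivalent to $S^1$ and must be an annulus.

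The most delicate step---and the only place where additional geometric input beyond convexity and topology is needed---is the strengthening of the disk conclusion in case (a) to \emph{totally geodesic}. Here I would exploit the Hessian formula \eqref{eq-Hess}: at the isolated minimum $p$, $\nabla^\Sigma\varphi(p)=0$ forces either $r(p)=0$ or $\pt_r(p)\perp T_p\Sigma$, and the pinching hypothesis controls the Hessian of $\varphi$ in terms of $|A|$ and $\langle N,X\rangle$. A maximum-principle and unique-continuation argument, exactly as carried out in \cite{AN16}, then propagates the equality $|A|^2\langle N,X\rangle^2\equiv 0$ from $p$ to all of $\Sigma$; combined with the free boundary condition this identifies $\Sigma$ as a totally geodesic disk through the origin.
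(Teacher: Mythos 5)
Your overall skeleton is the right one, and it is essentially the argument the paper has in mind: the paper gives no proof of Proposition \ref{prop1} at all, saying only that ``the proof for it is the same as in \cite{AN16}'', and your steps (convexity of $\varphi=\lambda(r)^2$ from Lemma \ref{lem1}, total convexity of the minimum set $\mathcal{C}$, exclusion of a two-dimensional piece, and the negative gradient flow of $\varphi$ retracting $\Sigma$ onto $\mathcal{C}$ to get disk/annulus) are exactly the Ambrozio--Nunes scheme transplanted to $\H^3$. Two of your intermediate steps are softer than they should be but are repairable: (i) total convexity alone does not exclude a geodesic \emph{arc with endpoints}; the clean way to see $\mathcal{C}$ is a closed $1$-manifold is that at any point of $\mathcal{C}$ one has $\pt_r\perp T\Sigma$, so by \eqref{eq-Hess} the two eigenvalues of $Hess_\Sigma\varphi$ are $2(\lambda')^2\bigl(1\pm\tfrac{|A||\langle N,X\rangle|}{\sqrt{2}\,\lambda'}\bigr)$, whose sum is $4(\lambda')^2>0$, so at most one eigenvalue vanishes and the transverse direction is strictly convex; (ii) the same trace computation gives $\Delta_\Sigma\varphi>0$ everywhere, which rules out a two-dimensional piece of $\mathcal{C}$ more directly than your geodesic-sphere/unique-continuation detour (though that detour is also valid, since an open piece of a geodesic sphere of $\H^3$ has mean curvature $2\coth r\neq 0$ and cannot be minimal).

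The genuine gap is your treatment of case (a), i.e.\ the upgrade from ``$\Sigma$ is a disk'' to ``$\Sigma$ is a \emph{totally geodesic} disk''. Your proposed mechanism --- propagating $|A|^2\langle N,X\rangle^2\equiv 0$ from the isolated minimum $p$ by a maximum principle and unique continuation --- does not get off the ground: at $p$ one only knows $\pt_r\perp T_p\Sigma$, so $\langle N(p),X\rangle=\pm\lambda(r(p))$, which is nonzero unless $p$ happens to be the center of the ball, and nothing in the hypotheses forces $|A|(p)=0$; there is no vanishing to propagate, no elliptic equation satisfied by $|A|^2\langle N,X\rangle^2$ to which a strong maximum principle applies, and unique continuation propagates vanishing from open sets or infinite-order zeros, not pointwise information at one point. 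This is also not what \cite{AN16} do: there the Euclidean case (a) is closed by Nitsche's theorem \cite{Nit85} that a free boundary minimal disk in a round ball is equatorial. The correct input here is the space-form extension of that theorem --- \cite[Theorem 4.1]{Sou97} or \cite[Theorem 2.1]{RS97} --- which the paper itself invokes in the proof of its next proposition: once the gradient-flow argument shows $\Sigma$ is topologically a disk, these results say that a free boundary minimal disk in a geodesic ball of $\H^3$ is totally geodesic. You need to cite (or reprove) that Hopf-differential--type uniqueness theorem; it cannot be replaced by the pinching hypothesis plus a maximum principle.
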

Moreover, reasoning as in \cite{AN16}, we have a corollary.
\begin{corr}
Let $\Sigma$ be a compact free boundary minimal surface in $B_R\subset\H^3$. If $\dfrac{|A|^2\langle N(x),X\rangle^2}{(\lambda')^2}< 2$ on $\Sigma$, then $\Sigma$ is a totally geodesic disk.
\end{corr}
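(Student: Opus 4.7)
The plan is to derive this as a direct consequence of Proposition \ref{prop1}: with strict inequality we will upgrade the semi-definiteness of $Hess_\Sigma \varphi$ to strict positive definiteness, which excludes case (b) of the proposition.

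First I would revisit the Hessian formula \eqref{eq-Hess} in Lemma \ref{lem1}. Since $\Sigma^2$ is minimal, the trace-free second fundamental form coincides with $A$, so its two eigenvalues are $\pm |A|/\sqrt{2}$. Consequently the symmetric endomorphism $I+\frac{1}{\lambda'}A\,\langle N(x),X\rangle$ has eigenvalues $1\pm \frac{|A|\langle N(x),X\rangle}{\sqrt{2}\,\lambda'}$. The strict pinching assumption $|A|^2\langle N,X\rangle^2/(\lambda')^2 < 2$ forces both eigenvalues to be strictly positive, so the second term on the right-hand side of \eqref{eq-Hess} is strictly positive definite. The first term $2\lambda\lambda''\langle Y,\partial_r\rangle\langle Z,\partial_r\rangle$ is positive semi-definite (note $\lambda''=\sinh r\geq 0$ in $\H^3$), so adding it preserves strict positive definiteness. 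Hence $Hess_\Sigma \varphi(x)$ is strictly positive definite at every $x\in\Sigma$.

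Next I would apply this to eliminate case (b) of Proposition \ref{prop1}. Strict positive definiteness of $Hess_\Sigma\varphi$ implies that $\varphi$ is strictly convex along every geodesic of $\Sigma$; in particular $\varphi$ cannot be constant on any non-trivial curve. Since $r$ and $\varphi=\lambda(r)^2$ are monotonically related, the minimum set $\mathcal{C}=\{p\in\Sigma: r(p)=\min_\Sigma r\}$ coincides with the minimum set of $\varphi$, and therefore cannot contain a closed geodesic. Thus option (b) of Proposition \ref{prop1} is ruled out, and we must be in option (a): $\mathcal{C}$ is a single interior point and $\Sigma$ is a totally geodesic disk.

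I do not anticipate any serious obstacle here: the only subtlety is to make sure minimality is used to pin down the eigenvalues of $A$ as $\pm|A|/\sqrt{2}$, and that the boundary behaviour $\nabla\varphi = 2\lambda\lambda'\partial_r \neq 0$ on $\partial\Sigma$ (from Lemma \ref{lem1}(a)) is consistent with the strict convexity argument — indeed the strict inequality $\langle \nabla\varphi,\nu\rangle>0$ on $\partial\Sigma$ (with $\nu=\partial_r$ the outward conormal) together with strict convexity shows immediately that the unique minimum lies in the interior, matching case (a). The corollary therefore follows at once from Lemma \ref{lem1} and Proposition \ref{prop1}.
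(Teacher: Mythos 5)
Your proposal is correct and follows essentially the argument the paper intends (the paper omits the proof, deferring to the reasoning of Ambrozio--Nunes): strict pinching makes the second term of \eqref{eq-Hess} positive definite, so $\varphi$ is strictly convex along geodesics and case (b) of Proposition \ref{prop1} is impossible. One phrase is slightly loose --- strict convexity does not prevent $\varphi$ from being constant on an arbitrary curve (level sets), only on a geodesic --- but since $\mathcal{C}$ in case (b) is a closed geodesic, your application is exactly right.
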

In view of this corollary, it remains to deal with the case where the function $\dfrac{|A|^2\langle N(x),X\rangle^2}{(\lambda')^2}$ attains its maximum value $2$ at some point on $\Sigma$.
\begin{prop}
Let $\Sigma$ be a compact free boundary minimal surface in $B_R\subset\H^3$. If $\dfrac{|A|^2\langle N(x),X\rangle^2}{(\lambda')^2}\leq 2$ on $\Sigma$, and $\dfrac{|A|^2\langle N,X\rangle^2}{(\lambda')^2}= 2$ at some point $p\in \Sigma$, then $\Sigma$ is a rotational annulus.
\end{prop}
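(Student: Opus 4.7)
The plan is to use the equality condition to identify the minimum circle $\mathcal{C}$ from Proposition \ref{prop1} as an orbit of a rotational Killing field of $\H^3$, and then invoke unique continuation for Jacobi fields on a minimal surface to conclude that $\Sigma$ is rotational.

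Since every totally geodesic disk has $|A|\equiv 0$, the equality hypothesis rules out case (a) of Proposition \ref{prop1}; so $\Sigma$ is an annulus with minimum set a simple closed geodesic $\mathcal{C}\subset \Sigma\setminus\pt\Sigma$. I first argue that equality in the pinching is automatic along all of $\mathcal{C}$. Indeed, $\varphi$ is constant on $\mathcal{C}$ and $\mathcal{C}$ is a $\Sigma$-geodesic, so $\mathrm{Hess}_\Sigma\,\varphi(Y,Y)=0$ for the unit tangent $Y$ to $\mathcal{C}$; since $\pt_r$ is normal to $\Sigma$ along $\mathcal{C}$, the Hessian formula \eqref{eq-Hess} reduces to $2(\lambda')^2\langle T(Y),Y\rangle=0$, where $T := I + (\langle N,X\rangle/\lambda')\,A$ is positive semi-definite by the pinching. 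Hence $Y\in\ker T$, i.e.\ $Y$ is a principal direction with $A(Y)=-(\lambda'/\langle N,X\rangle)\,Y$, and tracelessness of $A$ then gives $|A|^2\langle N,X\rangle^2/(\lambda')^2\equiv 2$ along $\mathcal{C}$. In particular $\mathcal{C}$ is a line of curvature of $\Sigma$.

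Working in the Minkowski model $\H^3\subset\R^4_1$, the curve $\mathcal{C}$ lies on the geodesic sphere $\{r=r_0\}$, a totally umbilic $2$-sphere in $\H^3$. Combining the principal-direction information above with the Codazzi equations, I would derive that $\mathcal{C}$ has constant geodesic curvature and vanishing torsion as a curve in $\H^3$, hence is an orbit of a rotational $SO(2)$-subgroup of the isometry group of $\H^3$; in Minkowski coordinates this is the transverse intersection of $\H^3$ with a spacelike affine $2$-plane $\Pi\subset\R^4_1$ perpendicular to a timelike line $\ell$ through the origin. Let $\xi$ denote the restriction to $\H^3$ of the Killing field of $\R^4_1$ generating rotations about $\ell$; by construction $\xi$ is tangent to $\mathcal{C}$.

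Finally, set $u = \langle \xi, N\rangle$ on $\Sigma$. Since $\xi$ is Killing and $\Sigma$ is minimal in the constant curvature space $\H^3$, $u$ satisfies the Jacobi equation $\Delta_\Sigma u + (|A|^2 - 2)\,u = 0$. Because $\xi$ is tangent to $\mathcal{C}\subset\Sigma$, $u\equiv 0$ on $\mathcal{C}$; unique continuation for second-order linear elliptic equations then forces $u\equiv 0$ on all of $\Sigma$. Hence $\xi$ is tangent to $\Sigma$ everywhere, the flow of $\xi$ preserves $\Sigma$, and $\Sigma$ is a rotational annulus about $\ell\cap\H^3$. The principal obstacle I anticipate is the middle step --- identifying $\mathcal{C}$ as an orbit of a rotational Killing field and locating $\ell$ intrinsically from the line-of-curvature information at $\mathcal{C}$; this is precisely where the Minkowski coordinates become essential, and is the reason the authors depart from the template of \cite{AN16}.
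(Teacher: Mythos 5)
Your strategy is the same as the paper's in outline (rule out case (a) of Proposition \ref{prop1}, identify $\mathcal{C}$ as a rotation orbit, and kill the Jacobi function $u=\langle \xi,N\rangle$), and your observation that $\mathrm{Hess}_\Sigma\varphi(\gamma',\gamma')=0$ forces $\gamma'$ into the kernel of $I+\frac{\langle N,X\rangle}{\lambda'}A$, so that $\mathcal{C}$ is a line of curvature with equality in \eqref{cond}, is correct and is precisely the input the argument needs. But the step you flag as the main obstacle has a much shorter solution than a Codazzi analysis: along $\mathcal{C}$ the gradient formula $\nabla\varphi=2\lambda\lambda'\pt_r^T=0$ shows $\pt_r\perp T\Sigma$, so $T_{\gamma(s)}\Sigma=T_{\gamma(s)}S_{r_0}$; since $\gamma$ is a $\Sigma$-geodesic its acceleration in $\H^3$ is normal to $\Sigma$, hence normal to $S_{r_0}$, so $\gamma$ is a geodesic of the geodesic sphere $S_{r_0}$, i.e.\ a great circle, which is automatically an orbit of a rotational subgroup. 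This is how the paper proceeds.

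The genuine gap is your final step: vanishing of $u$ on the curve $\mathcal{C}$ alone does \emph{not} imply $u\equiv 0$ by unique continuation. Solutions of $\Delta u+(|A|^2-2)u=0$ routinely vanish on closed curves without vanishing identically (any eigenfunction with a closed nodal line on an annulus is a counterexample; the harmonic function $y$ vanishing on $\{y=0\}$ is the local model). Unique continuation across a non-characteristic curve requires the full Cauchy data $u=\pt_\nu u=0$ on $\mathcal{C}$. So you must additionally prove $\nabla u=0$ along $\mathcal{C}$, and this is exactly where the line-of-curvature property you established gets used: in the Minkowski model, with $\pt_3$ the unit vector in $T_{\gamma(s)}\Sigma$ orthogonal to $\gamma'$, one computes
\begin{equation*}
\pt_3 u=\langle \pt_3\wedge\pt_3\wedge\pt_4,N\rangle_1+\langle \phi\wedge\pt_3\wedge\pt_4,\langle\pt_3,N\rangle_1\phi+D_{\pt_3}N\rangle_1=0,
\end{equation*}
the second term dying because $\pt_3$ is the other principal direction, so $D_{\pt_3}N\parallel\pt_3$. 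Only with $u=|\nabla u|=0$ on $\mathcal{C}$ can one conclude, either by Cauchy uniqueness or, as the paper does, by Cheng's nodal set theorem \cite{Che76}, which rules out a non-isolated set of critical points inside $u^{-1}(0)$ unless $u\equiv 0$. As written, your proposal skips this computation and the conclusion does not follow.
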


The proof is similar to that of \cite[Proposition 4]{AN16}, with some adaptation to curved space. We include it here for completeness.
\begin{proof}
The function $\varphi:\Sigma\rightarrow \R$ and $\mathcal{C}$ are defined as before. Since $\dfrac{|A|^2\langle N,X\rangle^2}{(\lambda')^2}= 2$ at some point $p\in \Sigma$, then case (a) in Proposition \ref{prop1} can not happen. In fact, should (a) occur, then by \cite[Theorem 4.1]{Sou97} or \cite[Theorem 2.1]{RS97} (an extension of Nitsche's result \cite{Nit85}), $\Sigma$ is a totally geodesic disk, a contradiction. Thus $\Sigma$ is homeomorphic to an annulus and $\mathcal{C}$ is a simple closed geodesic $\gamma: [0,l]\rightarrow \Sigma$ with arc parameter. Note that $\inf_\Sigma \varphi>0$.

Let $r_0>0$ satisfy $\sinh^2 r_0=\inf_\Sigma \varphi$ and let $S_{r_0}\subset \H^3$ be the geodesic sphere of radius ${r_0}$ centered at the origin. Then $\Sigma \subset \{x\in \H^3:r(x)\geq {r_0}\}$ and $\Sigma\cap S_{r_0}=\gamma([0,l])$. Therefore $T_{\gamma(s)}\Sigma=T_{\gamma(s)}S_{r_0}$ for all $s\in [0,l]$. Note that $\gamma$ is a geodesic in $\Sigma$. So it is also a geodesic in $S_{r_0}$. That is, $\gamma$ is a great circle of $S_{r_0}$.

Now we recall the Minkowski space model for $\H^3\subset \R^4_1$, that is,
\begin{equation}
\H^3=\{y\in \R^4_1: y^4=\sqrt{(y^1)^2+(y^2)^2+(y^3)^2+1}\},
\end{equation}
and we set $B_R\subset \H^3$ centered at $(0,0,0,1)$. Based on the analysis above, we can assume that $\pt_3$ is orthogonal to the hyperplane which contains the circle $\gamma$ and $\pt_4$. Then $\{\gamma'(s),\pt_3\}$ is an orthonormal basis of $T_{\gamma(s)}\Sigma$ for all $s\in [0,l]$.

Now as in \cite[Theorem 5.1]{Sou97}, we define for any three vectors $v_1,v_2,v_3$ in $\R^4_1$ their Lorentz cross product $v_1\wedge v_2\wedge v_3$ as the unique vector such that for any $v\in \R^4_1$:
\begin{equation}
\langle v_1\wedge v_2\wedge v_3,v\rangle_1=\det (v_1,v_2,v_3,v),
\end{equation}
where $\langle\cdot,\cdot\rangle_1$ is the Lorentz inner product and $\det$ is the determinant in the canonical basis.

With this preparation, we define a function $u:\Sigma\rightarrow \R$ by
\begin{equation}
u(x)=\langle \phi(x)\wedge \pt_3\wedge \pt_4, N(x)\rangle_1.
\end{equation}
Here $\phi:\Sigma\rightarrow \H^3\subset \R^4_1$ denotes the immersion of the surface.
It is straightforward to verify that
\begin{equation}
\Delta u+(|A|^2-2)u=0\text{ on }\Sigma,\quad \frac{\pt u}{\pt \nu}=\coth R \cdot u \text{ on }\pt\Sigma,
\end{equation}
which means that $u$ is a Jacobi function on $\Sigma$. And note that $u\equiv 0$ on $\Sigma$ if and only if $\Sigma$ is a rotational surface in $\R^4_1$ around the two-dimensional subspace generated by $\pt_3$ and $\pt_4$.

Now on the one hand, $\gamma([0,l])$ is contained in $u^{-1}(0)$, which implies at any point $\gamma(s)$ we have $\langle \nabla u,\gamma'\rangle=\frac{d(u(\gamma(s)))}{ds}=0$. On the other hand, recall $\{\gamma'(s),\pt_3\}$ is an orthonormal basis of $T_{\gamma(s)}\Sigma$ and $\gamma'(s)$ is a principal direction of $\Sigma$. Then $\pt_3$ is the other principal direction at $\gamma(s)$, which implies $D_{\pt_3} N$ is parallel to $\pt_3$. Therefore,
\begin{align*}
\langle \nabla u,\pt_3\rangle &=\langle \pt_3\wedge \pt_3\wedge \pt_4, N(x)\rangle_1+\langle \phi(x)\wedge \pt_3\wedge \pt_4, \langle \pt_3,N(x)\rangle_1 \phi(x)+D_{\pt_3}N(x)\rangle_1\\
&=0.
\end{align*}
To sum up, $\nabla u=0$ along the circle $\gamma$, which means the critical points of $u$ in the nodal set $u^{-1}(0)$ are not isolated. Then a result of S.~Y.~Cheng \cite[Theorem 2.5]{Che76} shows $u\equiv 0$ on $\Sigma$. As a consequence, $\Sigma$ is a rotational annulus as claimed.

\end{proof}


\section{Proof of Theorem \ref{thm1} for spherical case}\label{sec4}

For the problem in hemisphere, the key lemma is the following.
\begin{lem}\label{lem2}
Let $\Sigma$ be a free boundary minimal surface in $B_R\subset \SS^3$ with $R\in (0,\pi/2)$. Let $\varphi$ be the function defined by
\begin{equation}
\varphi(x)=\lambda(r(x))^2,\quad x\in \Sigma,
\end{equation}
and choose $\Phi(s)=1-\sqrt{1-s}.$
Then we have
\begin{enumerate}
  \item[(a)] $\nabla^\Sigma \Phi(\varphi)(x)=2\Phi'(\varphi)\lambda\lambda' \pt_r$ for all $x\in \pt \Sigma$.
  \item[(b)] For each $x\in \Sigma$, when $\dfrac{|A|^2\langle N(x),X\rangle^2}{(\lambda')^2}\leq 2$, the matrix $Hess_\Sigma \Phi(\varphi)(x)$ is positive semi-definite.
\end{enumerate}
\end{lem}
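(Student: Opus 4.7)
The plan is to lean on Lemma \ref{lem1} as much as possible and to treat (a) and (b) as chain-rule consequences, with the single new feature being the choice of $\Phi$.

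For (a), since $\Phi$ is smooth on $\{\varphi<1\}$ and $R<\pi/2$ forces $\varphi=\sin^{2} r<1$ on $\Sigma$, the ordinary chain rule gives $\nabla^{\Sigma}\Phi(\varphi)=\Phi'(\varphi)\,\nabla^{\Sigma}\varphi$. The computation of $\nabla^{\Sigma}\varphi$ in the proof of Lemma \ref{lem1}(a) used only that $X=\lambda(r)\pt_r$ is conformal, which is equally true in the spherical case, so $\nabla^{\Sigma}\varphi=2\lambda\lambda'\pt_r$ on $\pt\Sigma$. Multiplying by $\Phi'(\varphi)$ yields the claimed formula.

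For (b), I would begin from
\begin{equation*}
Hess_{\Sigma}\Phi(\varphi)(Y,Z)=\Phi''(\varphi)\,Y(\varphi)Z(\varphi)+\Phi'(\varphi)\,Hess_{\Sigma}\varphi(Y,Z),
\end{equation*}
substitute $Y(\varphi)=2\lambda\lambda'\langle Y,\pt_r\rangle$, and use formula \eqref{eq-Hess} from the proof of Lemma \ref{lem1}, which is valid in any of the three warped products. On the sphere $\lambda=\sin r$ satisfies $(\lambda')^{2}=1-\lambda^{2}=1-\varphi$, so the chosen $\Phi(s)=1-\sqrt{1-s}$ gives the clean relations $\Phi'(\varphi)=1/(2\lambda')$ and $\Phi''(\varphi)=1/(4(\lambda')^{3})$. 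The heart of the computation is to collect the terms proportional to $\langle Y,\pt_r\rangle\langle Z,\pt_r\rangle$: after substitution their combined coefficient becomes
\begin{equation*}
\frac{\lambda^{2}}{\lambda'}+\frac{\lambda\lambda''}{\lambda'}=\frac{\lambda\,(\lambda+\lambda'')}{\lambda'},
\end{equation*}
which vanishes identically on $\SS^{3}$ because $\lambda''=-\lambda$. Once this bad term is killed, what remains is
\begin{equation*}
Hess_{\Sigma}\Phi(\varphi)(Y,Z)=\lambda'\!\left\langle Y+\tfrac{1}{\lambda'}\langle N,X\rangle A(Y),\,Z\right\rangle,
\end{equation*}
and I would conclude exactly as in Lemma \ref{lem1}(b): $\lambda'>0$ on $B_{R}$ because $R<\pi/2$, minimality in dimension two forces the principal curvatures to equal $\pm|A|/\sqrt{2}$, so the endomorphism $\mathrm{id}+\langle N,X\rangle A/\lambda'$ has eigenvalues $1\pm|A|\,|\langle N,X\rangle|/(\sqrt{2}\lambda')$, and these are nonnegative precisely when $|A|^{2}\langle N,X\rangle^{2}\le 2(\lambda')^{2}$.

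The main conceptual obstacle, compared with the hyperbolic case, is the sign change $\lambda''\le 0$ on $\SS^{3}$: the function $\varphi$ itself is not convex, and the contribution $2\lambda\lambda''\langle Y,\pt_r\rangle\langle Z,\pt_r\rangle$ in $Hess_{\Sigma}\varphi$ is the wrong sign. The composition with $\Phi$ is engineered to cure exactly this defect; my derivation would motivate the specific choice $\Phi(s)=1-\sqrt{1-s}$ as the (unique up to affine rescaling) solution of the ODE $\Phi''/\Phi'=-\lambda''/(2\lambda(\lambda')^{2})=1/(2(1-\varphi))$, which is precisely the cancellation condition identified above. Everything else in the argument is a routine transcription of the hyperbolic proof.
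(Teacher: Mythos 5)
Your proposal is correct and follows essentially the same route as the paper: chain rule for the Hessian, substitution of \eqref{eq-Hess}, verification that the coefficient of $\langle Y,\pt_r\rangle\langle Z,\pt_r\rangle$ vanishes because $\lambda''=-\lambda$ on $\SS^3$, and the same eigenvalue argument for the remaining term. The added remark deriving $\Phi$ as the solution of the cancellation ODE is a nice motivation but not a departure from the paper's argument.
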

\begin{proof}
(a) Since $\nabla^\Sigma \Phi(\varphi)=\Phi'(\varphi)\nabla^\Sigma \varphi$, the conclusion follows immediately.

(b) First we have for any $Y,Z\in T_x\Sigma$,
\begin{align*}
Hess_\Sigma \Phi(\varphi)(Y,Z)&=YZ(\Phi(\varphi))-\nabla_YZ(\Phi(\varphi))\\
                              &=Y(\Phi'(\varphi)Z\varphi)-\Phi'(\varphi)\nabla_YZ\varphi\\
                              &=\Phi''(\varphi)Y\varphi\cdot Z\varphi+\Phi'(\varphi)YZ\varphi-\Phi'(\varphi)\nabla_YZ\varphi\\
                              &=\Phi''(\varphi)4\lambda^2(\lambda')^2\langle Y,\pt_r\rangle \langle Z,\pt_r\rangle +\Phi'(\varphi)Hess_\Sigma \varphi(Y,Z).
\end{align*}
Now using the computation in Lemma \ref{lem1}, i.e. Equality \eqref{eq-Hess}, we get
\begin{align*}
Hess_\Sigma& \Phi(\varphi)(Y,Z)=\Phi''(\varphi)4\lambda^2(\lambda')^2\langle Y,\pt_r\rangle \langle Z,\pt_r\rangle\\
& +\Phi'(\varphi)\left(2\lambda \lambda''\langle Y,\pt_r\rangle \langle Z,\pt_r\rangle+2(\lambda')^2\left\langle Y+\frac{1}{\lambda'} A(Y)\langle N(x),X\rangle,Z\right\rangle\right)\\
&=2\left(2\Phi''(\varphi)\lambda^2(\lambda')^2+\Phi'(\varphi)\lambda \lambda''\right)\langle Y,\pt_r\rangle \langle Z,\pt_r\rangle\\
&+2\Phi'(\varphi)(\lambda')^2\left\langle Y+\frac{1}{\lambda'} A(Y)\langle N(x),X\rangle,Z\right\rangle.
\end{align*}
Note that $\lambda=\sin r$, $\lambda'=\cos r$, $\lambda''=-\sin r$, $\Phi'=\frac{1}{2}(1-s)^{-\frac{1}{2}}$ and $\Phi''=\frac{1}{4}(1-s)^{-\frac{3}{2}}$. So we can check that $2\Phi''(\varphi)\lambda^2(\lambda')^2+\Phi'(\varphi)\lambda \lambda''=0$. Then the conclusion follows.

\end{proof}

The rest of the proof of Theorem \ref{thm1} for spherical case is similar to that for hyperbolic case, with minor modification. Just note that now $\Phi(\varphi)$ is a convex function under the pinching condition \eqref{cond}. Then the remaining goes through. Here we omit the details. Therefore we complete the proof of Theorem \ref{thm1}.

\begin{rem}
The existence of the rotational free boundary minimal annulus in $B_R\subset \SS^3_+$ ($R<\pi/2$) is easier now. For example, see the deep result \cite{MNS13} by Davi Maximo, Ivaldo Nunes and Graham Smith. Moreover, consider the parametrization of $\Sigma$ as $F_a:\R\times \SS^1\rightarrow \SS^3$ given by (see \cite{dD83})
\begin{align*}
F_a(s,\theta)&=\left(\sqrt{\frac{1}{2}+a\cos 2s}\cos \theta,\sqrt{\frac{1}{2}+a\cos 2s}\sin \theta,\right.\\
&\left.\sqrt{\frac{1}{2}-a\cos 2s}\sin \phi(s),\sqrt{\frac{1}{2}-a\cos 2s}\cos \phi(s)\right),
\end{align*}
where $\phi(s)$ is the integral
\begin{equation}
\phi(s)=\sqrt{\frac{1}{4}-a^2}\int_0^s \frac{1}{(\frac{1}{2}-a\cos 2t)\sqrt{\frac{1}{2}+a\cos 2t}}dt,
\end{equation}
and $-\frac{1}{2}<a<0$ is a constant. Then after a lengthy, but direct calculation, one can check that \eqref{cond} holds for $F_a$ in some geodesic ball $B_{R(a)}\subset \SS^3$ with $R(a)<\pi/2$.
\end{rem}


\bibliographystyle{Plain}

\end{document}